\documentclass[11pt]{amsproc}

\usepackage{amsmath, amsthm,amsfonts,amscd,amssymb, setspace}
\usepackage{a4wide}
\usepackage{amsrefs}

\newtheorem{theorem}{Theorem}[section]
\newtheorem{lemma}[theorem]{Lemma}
\newtheorem{proposition}[theorem]{Proposition}

\theoremstyle{definition}

\theoremstyle{remark}

\theoremstyle{remark}

\theoremstyle{remark}

\theoremstyle{remark}

\newcommand{\Zb}{{\mathbb Z}}
\newcommand{\F}{{\mathbf F}}

\newcommand{\Nb}{{\mathbb N}}

\renewcommand{\phi}{\varphi}

\title{About the length of laws for finite groups}

\author{Andreas Thom}
\address{Andreas Thom, TU Dresden, Germany}
\email{andreas.thom@tu-dresden.de}

\begin{document}

\onehalfspace

\begin{abstract}
We prove new upper bounds of the form $O(n /\log(n)^{2-\varepsilon})$ for the length of laws that hold for all groups of size at most $n$ -- improving on previous results of Bou-Rabee and Kassabov-Matucci. The methods make use of the classification of finite simple groups. Stronger bounds are proved in case the groups are assumed to be nilpotent or solvable. 
\end{abstract}

\maketitle


\section{Introduction}

We denote the free group on two generators $a,b$ by $\F_2=\langle a,b \rangle$. For every group $G$, a word $w \in \F_2$ determines a word map $w \colon G \times G \to G$ by evaluation. We say that $w$ is a law for $G$ if the image of the corresponding word map on $G$ consists only of the neutral element -- that is, $w(g,h)=1$ for all $g,h \in G$. Every finite group admits laws, for example $a^n \in \F_2$ is a law for $G$ for $n:=|G|$. Recently, there has been some interest in finding short laws for specific groups of families of groups. This was first done systematically by Hadad for finite simple groups of Lie type in \cite{MR2764921}. Bou-Rabee \cite{MR2574859} started a program to determine the so-called residual finiteness growth of various groups, which also involved the study of laws for symmetric groups ${\rm Sym}(n)$ (see \cite{brm} and independently \cite{MR2972333}) and the family of all groups (resp.\ all nilpotent or all solvable groups) of size at most $n$. The best upper bounds for length of laws of ${\rm Sym}(n)$ are currently due to Kozma and the author \cite{kozthom}, where the proof makes use of consequences of the Classification of Finite Simple Groups (CFSG).

The aim of this note is to provide a construction of a non-trivial word $w_n \in \F_2$ of length $O(n /\log(n)^{2-\varepsilon})$ which is a law for every finite group of size at most $n$. We will first do this separately for solvable groups and semi-simple groups (in the sense of Fitting) and then combine the two results to obtain a result for general finite groups. This answers a question from Kassabov-Matucci \cite[Question 18]{MR2784792}. Our main result is stated as Theorem \ref{main} at the end of the paper. Its proof is spread over the entire paper, combining results on nilpotent and solvable groups with the case of semi-simple groups and results of Fitting. In the course of the proof, we also provide new upper bounds for the the length of laws that hold for all nilpotent resp.\ all solvable groups of size at most $n$. 
 
Previously, bounds on the length of laws that hold for all groups of size at most $n$ have been obtained by Bou-Rabee \cite{MR2574859} who constructed such a law of length $O(n^3)$. Later, Kassabov and Matucci \cite{MR2784792} improved the bound to $O(n^{3/2})$ using an elementary but technical result of Lucchini on permutation groups from \cite{MR1722784}. Our methods rely on the CFSG and thus use a machinery which is considerably more heavy than the methods used before.

The only known lower bound comes from the observation that ${\rm PSL}_2(p)$ does not satisfy any law shorter than $p$, see \cite{MR2532876}. Since the size of ${\rm PSL}_2(p)$ is roughly $p^3$, this implies that no law of length $o(n^{1/3})$ can hold for all groups of size at most $n$. We conjecture that the bound $n/\log(n)^2$ proved in this paper  is sharp up to negligible factors -- see also the discussion after Theorem \ref{main}.

\section{Preliminaries}

Let $G$ be a group. For a word $w \in \F_2$ we set $Z(G,w):= \{(g,h) \in G \times G \mid w(g,h)=1 \}$ and call it the vanishing set of $w$. Clearly, $w$ is a law for $G$ if and only if $Z(G,w)=G \times G$.
We denote the minimal word length of a word $w \in \F_2$ when written in terms of the generators $\{a,a^{-1},b,b^{-1}\}$ by $|w|$.

The following lemma is the key in many steps of the argument. It allows to combine finite sets of words and increase the vanishing set. We called it the commutator lemma, and it has appeared in different forms in the work of Hadad \cite[Lemma 3.3]{MR2764921} and Kassabov-Matucci \cite[Lemma 10]{MR2784792}. In the following form, it was proved as \cite[Lemma 2.2]{kozthom}.

\begin{lemma} \label{commut}
Let $w_1,\dotsc,w_m$ be non-trivial words in $\mathbf{F}_2$. Then there exists a non-trivial word $w\in\mathbf{F}_k$ such that for all groups $G$ we have:
$$Z(G,W) \supset Z(G,w_1) \cup \dots \cup Z(G,w_m).$$
Moreover, the length of $w$ is
bounded by $16\cdot m^2\max|w_i|$.
\end{lemma}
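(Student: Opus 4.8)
The plan is to reduce the problem to a single word map on $\mathbf{F}_2$ by the standard trick of encoding $k$-tuples of generators into two generators, combined with iterated commutators to build a word whose vanishing set is at least the union of the $Z(G,w_i)$.

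**First I would** handle the case $m=2$, which is the heart of the matter. Given non-trivial $w_1,w_2\in\mathbf{F}_2$, I want a word $v$ in four free generators $x_1,y_1,x_2,y_2$ such that $v$ vanishes at $(g_1,h_1,g_2,h_2)$ whenever $w_1(g_1,h_1)=1$ \emph{or} $w_2(g_2,h_2)=1$. The natural candidate is the commutator $v:=[w_1(x_1,y_1),\,w_2(x_2,y_2)^{u}]$ for a suitable conjugating element $u$, or more robustly a product of several such commutators/conjugates: if either $w_i$ evaluates to the identity, the commutator collapses. The subtlety is that a single commutator $[w_1,w_2]$ can be trivial as an element of the free group even when $w_1,w_2$ are both non-trivial (e.g. if they commute), so one must insert a conjugation, say $v=[w_1(x_1,y_1),\,x_1 w_2(x_2,y_2) x_1^{-1}]$ or iterate, to guarantee $v$ is non-trivial in $\mathbf{F}_4$ while keeping the vanishing property; non-triviality can be checked by a length/cancellation argument or by exhibiting a group where $v$ does not vanish. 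Each such $v$ has length $O(\max|w_i|)$.

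**Then I would** iterate: to combine $m$ words, group them in a binary-tree fashion, at each internal node applying the $m=2$ construction. After $\lceil\log_2 m\rceil$ levels one obtains a word in roughly $2m$ free generators whose vanishing set contains $\bigcup_i Z(G,w_i)$; tracking lengths through the tree gives a bound of the shape $C^{\log m}\max|w_i|$, which is polynomial in $m$ — one must be a little careful to arrange the bookkeeping so that the exponent of $m$ comes out to $2$ and the constant to $16$ (this is where the precise form of the two-word gadget matters, e.g. using a product of a bounded number of conjugated commutators and controlling how many conjugators are reused). Finally, I would collapse $\mathbf{F}_k\to\mathbf{F}_2$: substitute the $k$ free generators by words $e_1(a,b),\dots,e_k(a,b)$ in $\mathbf{F}_2$ chosen so that the substitution is injective on the relevant subgroup (for instance $e_j=a^{j}ba^{-j}$, whose images generate a free subgroup of $\mathbf{F}_2$ of rank $k$), so that $Z(G,w)\supset Z(G,v)$ under the corresponding coordinate map and non-triviality is preserved; the extra length cost is a factor of $O(k)=O(m)$, which must also be folded into the final $O(m^2)$ estimate.

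**The main obstacle** I expect is not the vanishing-set inclusion — that is a short formal computation — but rather (i) guaranteeing \emph{non-triviality} of the combined word in the free group at every stage, since commutators and substitutions can introduce unexpected cancellation, and (ii) pushing the length estimate down to exactly $16\cdot m^2\max|w_i|$ rather than a crude $C^{\log m}$ bound, which forces one to use a genuinely \emph{linear}-in-the-number-of-inputs gadget at each merge (combining all $m$ words in one layer with $m$ pairwise-separated conjugators) rather than a recursive doubling. Since the paper attributes this precise statement to \cite[Lemma 2.2]{kozthom}, I would follow that route: one explicit word of the form $\prod_{i} \big[\, t_i w_i(\dots) t_i^{-1},\ s\,\big]$ or similar, with the $t_i$ chosen from a free set, giving length $O(m)\cdot\max|w_i|$ before the $\mathbf{F}_k\to\mathbf{F}_2$ collapse and $O(m^2)\cdot\max|w_i|$ after it, with the constants arranged to land at $16$.
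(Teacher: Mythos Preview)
The paper does not actually prove this lemma; it merely quotes it from \cite[Lemma~2.2]{kozthom}. So there is no in-paper argument to compare against, but your sketch does contain a genuine gap. In your $m=2$ step you put $w_1$ and $w_2$ on \emph{disjoint} generating sets $(x_1,y_1)$ and $(x_2,y_2)$ of $\mathbf F_4$, and only later collapse to $\mathbf F_2$ via substitutions $e_j(a,b)$. After that collapse the resulting word vanishes at $(g,h)$ exactly when $w_1\big(e_1(g,h),e_2(g,h)\big)=1$ or $w_2\big(e_3(g,h),e_4(g,h)\big)=1$, which is \emph{not} the condition $w_1(g,h)=1$ or $w_2(g,h)=1$; hence you do not obtain $Z(G,w)\supset Z(G,w_1)\cup Z(G,w_2)$. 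The only collapse preserving the intended vanishing sets is the diagonal one ($x_1,x_2\mapsto a$ and $y_1,y_2\mapsto b$), and that is precisely the map under which non-triviality can be lost. Your proposed one-layer gadget $\prod_i\bigl[t_i w_i t_i^{-1},\,s\bigr]$ fails for a separate reason: the vanishing of a single commutator factor does not force the product to vanish, so this word does not have vanishing set containing $\bigcup_i Z(G,w_i)$.

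The construction that works never leaves $\mathbf F_2$. One keeps each $w_i$ as a word in the \emph{same} letters $a,b$ and replaces it by a conjugate $u_i:=t_i\,w_i(a,b)\,t_i^{-1}$, choosing $t_i\in\mathbf F_2$ (of length $O(\max_j|w_j|)$, via an elementary cancellation argument) so that $u_1,\dots,u_m$ freely generate a free subgroup of $\mathbf F_2$. Since conjugation does not change the vanishing set, $Z(G,u_i)=Z(G,w_i)$. One then forms the \emph{balanced} iterated commutator of the $u_i$ along a binary tree of depth $\lceil\log_2 m\rceil$: if any $u_i$ evaluates to $1$ the whole tree collapses to $1$, freeness of the $u_i$ guarantees non-triviality at every internal node, and each level at most quadruples the length. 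This yields $|w|\le 4^{\lceil\log_2 m\rceil}\max_i|u_i|$, from which the stated bound $16\,m^2\max_i|w_i|$ follows.
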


Lemma \ref{commut} can be used to construct laws for a single group but also laws for families of groups. Indeed, applying it to a laws for $G$ and $H$ yields a law that holds for both $G$ and $H$. We will apply the lemma in both ways.

The preceding lemma is enough to prove the upper bound of Bou-Rabee on laws that hold for all finite groups of size at most $n$. Indeed, every element of a group of order at most $n$ has order at most $n$. Hence, $G \times G = \cup_{k=1}^n Z(G,a^k)$. The lemma provides a law of length $16n^3$. The result of Kassabov-Matucci starts with this observation and treats groups with an element of order exceeding $n^{1/2}$ separately in order to obtain the improved bound $O(n^{3/2})$, see \cite{MR2784792}.

\vspace{0.2cm}

The following lemma allows us to deal with extensions of groups.
\begin{lemma} \label{ext}
Let $1 \to H \to G \to G/H \to 1$ be an extension of groups. Let $w' \in \F_2$ be a non-trivial law for $H$ and $w'' \in \F_2$ be a non-trivial law for $G/H$. There exists a non-trivial law $w \in \F_2$ for $G$ of length bounded by $|w'| (|w''|+2)$.
\end{lemma}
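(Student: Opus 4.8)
The plan is to substitute into $w'$ two elements that are guaranteed to lie in $H$, both manufactured from $w''$. Since $w''$ is a law for $G/H$, for every $g,h\in G$ the element $w''(g,h)$ is trivial in $G/H$, i.e.\ $w''(g,h)\in H$; and since $H\trianglelefteq G$, the conjugate $g^{-1}\,w''(g,h)\,g$ lies in $H$ as well. I would therefore set
\[
w(a,b)\ :=\ w'\!\bigl(\,w''(a,b),\ a^{-1}\,w''(a,b)\,a\,\bigr)\ \in\ \F_2
\]
(or the analogous word with $a$ replaced by $b$; the choice is discussed below). Then for all $g,h\in G$ both arguments of $w'$ lie in $H$, so $w(g,h)=1$ because $w'$ is a law for $H$; hence $w$ is a law for $G$. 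The length bound is a direct count: writing $w'$ as a reduced word in $x^{\pm1},y^{\pm1}$ with $p$ letters $x^{\pm1}$ and $q$ letters $y^{\pm1}$ (so $p+q=|w'|$) and substituting $x\mapsto w''(a,b)$, a word of length $\le|w''|$, and $y\mapsto a^{-1}\,w''(a,b)\,a$, a word of length $\le|w''|+2$, yields an unreduced word of length at most $p|w''|+q(|w''|+2)\le|w'|\,(|w''|+2)$, and free reduction only shortens it.

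The one point that needs an argument is that $w$ is non-trivial in $\F_2$. Feeding $w''(a,b)$ into both slots of $w'$ need not work, since $w'$ may vanish on the diagonal (e.g.\ $w'=xy^{-1}$), so the second slot must be perturbed, but cheaply enough to respect the $+2$ length budget. Put $u:=w''(a,b)$, which is non-trivial because $w''$ is. I would show: if $u$ is not a non-zero power of $a$, then $u$ and $v:=a^{-1}ua$ freely generate a free subgroup of $\F_2$. By Nielsen--Schreier, $\langle u,v\rangle$ is free of rank $\le2$, and it has rank $\le1$ only if $u$ and $v$ commute; and in a free group $u$ commutes with $a^{-1}ua$ precisely when $u\in\langle a\rangle$ — one sees this by passing to the root $r$ of $u$, invoking uniqueness of roots to conclude $a^{-1}ra=r^{\pm1}$, and then using that the centraliser of $a$ in $\F_2$ is $\langle a\rangle$. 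So under our hypothesis $\langle u,v\rangle$ is free of rank $2$ and is generated by the two elements $u,v$; since finitely generated free groups are Hopfian, $\{u,v\}$ is in fact a free basis, so the substitution $x\mapsto u$, $y\mapsto v$ defines an isomorphism $\F_2\to\langle u,v\rangle$, which sends $w'\ne1$ to $w'(u,v)=w(a,b)\ne1$. By symmetry, if $u$ is not a non-zero power of $b$, then conjugating by $b$ instead works.

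It remains to note that one of the two constructions is always available: since $u\ne1$ and $\langle a\rangle\cap\langle b\rangle=\{1\}$ in $\F_2$, the element $u$ cannot be a non-zero power of both $a$ and $b$. I expect the main obstacle to be exactly this general-position argument — pinning down a conjugation that adds only two letters yet forces $\{u,v\}$ to be a rank-two free basis, together with the small case analysis handling the possibility that $w''(a,b)$ is a power of one of the two generators.
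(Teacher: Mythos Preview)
Your proof is correct and follows exactly the same construction as the paper: substitute $w''(a,b)$ and a conjugate $x^{\pm1}w''(a,b)x^{\mp1}$ (with $x\in\{a,b\}$) into $w'$. The paper's proof is a single line that simply asserts one can choose $x$ so that $w''(a,b)$ and its conjugate are free; you have supplied the justification for that assertion, which the paper omits.
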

\begin{proof}
We set $w(a,b):= w'(w''(a,b), xw''(a,b)x^{-1})$, where we choose $x \in \{a,a^{-1},b,b^{-1}\}$ so that $w''(a,b)$ and $x w''(a,b) x^{-1}$ are free in $\F_2$. This proves the claim.
\end{proof}

The preceding lemma will allow us to decompose a group into its solvable radical and the corresponding semi-simple quotient. But we will also apply it to study automorphism groups of finite simple groups and reduce the case of semi-simple groups to that of simple groups.

\section{Nilpotent and solvable groups}

Let $G$ be a group. We define the lower central series of $G$ by setting $\gamma_1(G)=G$ and defining $\gamma_{k+1}(G)=[G,\gamma_{k}(G)]$ for all $k \geq 1$. Similarly, the derived series is defined by $G^{(0)} := G$ and $G^{(k+1)} := [G^{(k)},G^{(k)}]$ for all $k \geq 0$. A group is called {\it nilpotent} if $\gamma_{k+1}(G)=1$ for some $k \in \Nb$ and the smallest such $k$ is called the nilpotency class of $G$. A group is called {\it solvable} if if $G^{(k)}=1$ for some $k \in \Nb$ and the smallest such $k$ is called the solvability class of $G$.
It is a classical result of Hall that $G^{(k)} \subset \gamma_{2^k}(G)$ for all $k \geq 0$.

It is well-known that the nilpotency class of a nilpotent group can be estimated by the size. More precisely, we have that
$\gamma_k(G)=1$ whenever $k> \log_2(n)$. Indeed, this is obvious since $|\gamma_m(G)/\gamma_{m+1}(G)| \geq 2$, whenever $\gamma_m(G) \neq 1$. By a result of Elkasapy and the author \cite[Theorem 2.2]{elkthom}, there exists a non-trivial word $v_k \in \F_2$ of length at most $O(k^{3/2})$ which lies in $\gamma_k(\F_2)$. This implies the following proposition, compare \cite[Theorem 5.1]{elkthom} for a slightly sharper bound.

\begin{proposition} \label{nil}
Let $n \in \Nb$. There exists a word $v_n \in \F_2$ of length bounded by $O(\log(n)^{3/2})$ which is a law for all nilpotent groups of size at most $n$.
\end{proposition}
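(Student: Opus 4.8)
The plan is to combine the two facts recalled just before the statement. First, a nilpotent group $G$ with $|G| \le n$ has nilpotency class at most $\log_2(n)$: if $\gamma_m(G) \neq 1$ then $|\gamma_m(G)/\gamma_{m+1}(G)| \ge 2$, so the descending chain $G = \gamma_1(G) \supseteq \gamma_2(G) \supseteq \cdots$ can make at most $\log_2(n)$ proper steps before it reaches the trivial subgroup. Second, the theorem of Elkasapy and the author provides, for each $k$, a non-trivial word $v_k \in \gamma_k(\F_2)$ with $|v_k| = O(k^{3/2})$.

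Concretely, I would set $k := \lfloor \log_2(n) \rfloor + 1$ and define $v_n := v_k$. This word is non-trivial by construction, and its length is $O(k^{3/2}) = O(\log(n)^{3/2})$, as required.

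It remains to verify that $v_n$ is a law for every nilpotent group $G$ with $|G| \le n$. Given any pair $(g,h) \in G \times G$, consider the homomorphism $\phi \colon \F_2 \to G$ with $\phi(a) = g$ and $\phi(b) = h$. Since the lower central series is functorial, $\phi(\gamma_k(\F_2)) \subseteq \gamma_k(\phi(\F_2)) \subseteq \gamma_k(G)$, and as $v_k \in \gamma_k(\F_2)$ we obtain $v_n(g,h) = \phi(v_k) \in \gamma_k(G)$. But $k > \log_2(n)$ is strictly larger than the nilpotency class of $G$, so $\gamma_k(G) = 1$, whence $v_n(g,h) = 1$. Since $(g,h)$ was arbitrary, $v_n$ is a law for $G$.

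There is essentially no serious obstacle: the proposition is a direct corollary of the quoted word-length bound inside $\gamma_k(\F_2)$ together with the elementary class estimate. The only point needing (minimal) care is the functoriality of the lower central series used to pass from $v_k \in \gamma_k(\F_2)$ to the vanishing of $v_n$ on $G$; optimizing the constant and the precise choice of $k$ (as in the slightly sharper bound of \cite[Theorem 5.1]{elkthom}) is inessential for the stated $O(\cdot)$ estimate.
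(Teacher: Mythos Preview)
Your proof is correct and follows exactly the same approach as the paper: bound the nilpotency class by $\log_2(n)$ via the size of the successive quotients, then invoke the Elkasapy--Thom word $v_k \in \gamma_k(\F_2)$ of length $O(k^{3/2})$ with $k = \lfloor \log_2(n)\rfloor + 1$. The only difference is that you spell out the functoriality of the lower central series, which the paper leaves implicit.
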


In order to treat solvable groups, we need to recall a few more notions. Let $G$ be a solvable group of size at most $n$. By a result of Fitting, there exists a unique maximal, normal, and nilpotent subgroup of $G$. This group is called the Fitting subgroup and we denote it by $N \lhd G$. See \cite{hall} for general background on this and related concepts that will come up.
It is a classical fact that the canonical map $\alpha \colon G \to {\rm Aut}(N)$, given by the conjugation action, is injective.

Since $N$ is nilpotent, $N$ is a product of $p$-groups $N = \prod_p N(p)$, and we obtain also ${\rm Aut}(N)= \prod_p {\rm Aut}(N(p))$. For the $p$-group $N(p)$, the Frattini subgroup $\Phi(N(p))$ is equal to $N(p)^p[N(p),N(p)]$ and we denote the Frattini quotient by $V(p) = N(p)/\Phi(N(p))$. The quotient group $V(p)$ is a vector space over the finite field $\Zb/p\Zb$. Since $|V(p)| \leq |N(p)| \leq n$, the dimension $m(p):=\dim_{\Zb/p\Zb} V(p)$ of $V(p)$ over $\Zb/p\Zb$ is bounded by $\log_2(n)$.

We consider the natural homomorphism $\alpha_p \colon {\rm Aut}(N(p)) \to {\rm Aut}(V(p)) = {\rm GL}_{m(p)}(p)$ with $m(p) \leq \log_2(n)$. Clearly, the image of $G$ in ${\rm GL}_{m(p)}(p)$ is solvable since $G$ is assumed to be solvable. 
By a result of Zassenhaus \cite[Satz 7]{MR3069692}, the solvability class of a linear group is logarithmic in the dimension of the vector space on which it acts.
Later, Huppert \cite{MR0089851} proved that the solvability class of a solvable subgroup of ${\rm GL}_{m(p)}(p)$ cannot exceed $1 + 7\log_2 (m(p))$. Newman \cite[Theorem A$_S$]{MR0302781} determined the optimal bound to be $5\log_9(m(p)) + O(1)$.
Thus, we obtain that the solvability class of the image of $G$ in $\prod_p {\rm Aut}(V(p))$ cannot exceed $5 \log_9 (2) \cdot \log_2 (\log_2(n)) + O(1)$. 

Using the results from \cite[Theorem 2.2]{elkthom} again, we know that there exists a non-trivial word $v_k \in \F^{(k)}$ of length bounded from above by roughly $2^{11k/6}$. Applying this with $k= \lfloor 5 \log_9 (2) \cdot \log_2 (\log_2(n)) + C \rfloor$ for some suitable constant $C$, we obtain a word of length bounded by $2^C \cdot \log_2(n)^{ 5 \log_9 (2) \cdot 11/6}$ that is a law for the image $\alpha(G)$ of $G$ in $\prod_p {\rm Aut}(V(p))$. Note that $5 \log_9 (2) \cdot 11/6 \leq 3$.

Burnside showed (see for example \cite[Chapter 5, Theorem 1.4]{MR0231903}) that the kernel of the map from ${\rm Aut}(N(p))$ to ${\rm GL}_{m(p)}(p)$ is a $p$-group and thus, the group $\ker(\alpha \colon G \to {\rm Aut}(N))$ is nilpotent. Clearly, the size of the kernel is also bounded by $n$. Thus, we apply Proposition \ref{nil} to obtain a law of length bounded by $O(\log(n)^{3/2})$ for the kernel. Altogether, we can now use Lemma \ref{ext} to obtain a word of length bounded by $O(\log(n)^{9/2})$ that works for all solvable groups of size at most $n$. 

In other words, we have proved the following proposition:

\begin{proposition} \label{sol}
For every $n \in \Nb$ there exists a word $v_n \in \F_2$ of length bounded by $O(\log(n)^{9/2})$ which is a law for all solvable groups of size at most $n$.
\end{proposition}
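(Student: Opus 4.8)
The plan is to assemble the proof from the ingredients already developed in the text, so the task is essentially bookkeeping on word lengths. Let $G$ be a solvable group with $|G| \leq n$, let $N \lhd G$ be its Fitting subgroup, and recall that the conjugation map $\alpha \colon G \to \aut(N)$ is injective. I would factor $\alpha$ through the action on the Frattini quotients: write $\beta \colon G \to \prod_p \gl_{m(p)}(p)$ for the composite of $\alpha$ with $\prod_p \alpha_p$, where $m(p) \leq \log_2(n)$. The kernel $K := \ker(\beta)$ sits inside $\prod_p \ker(\alpha_p)$, which is nilpotent by Burnside's theorem, so $K$ is a nilpotent group of size at most $n$; and the image $\beta(G)$ is a solvable linear group whose solvability class is at most $5\log_9(m(p)) + O(1) \leq 5\log_9(\log_2 n) + O(1)$ by Newman's bound. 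This gives an extension $1 \to K \to G \to \beta(G) \to 1$ with both ends under control.

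Next I would produce the two constituent laws. For $K$, Proposition \ref{nil} directly supplies a non-trivial $w' \in \F_2$ that is a law for all nilpotent groups of size at most $n$, of length $O(\log(n)^{3/2})$. For $\beta(G)$, I invoke \cite[Theorem 2.2]{elkthom}: there is a non-trivial $v_k \in \F_2^{(k)}$ of length roughly $2^{11k/6}$, and taking $k = \lfloor 5\log_9(2)\cdot\log_2(\log_2 n) + C\rfloor$ for a suitable constant $C$ makes $v_k$ vanish on every group of solvability class at most that of $\beta(G)$; its length is $O(\log(n)^{5\log_9(2)\cdot 11/6}) = O(\log(n)^{3})$, since $5\log_9(2)\cdot 11/6 \leq 3$. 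So $w'' := v_k$ is a non-trivial law for $G/K \cong \beta(G)$ of length $O(\log(n)^3)$.

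Finally I apply Lemma \ref{ext} to the extension $1 \to K \to G \to G/K \to 1$ with these $w'$ and $w''$: it yields a non-trivial law $v_n \in \F_2$ for $G$ of length at most $|w'|\,(|w''| + 2) = O(\log(n)^{3/2}) \cdot O(\log(n)^3) = O(\log(n)^{9/2})$. The one subtlety to flag is uniformity: the word $w''$ from \cite{elkthom} depends only on an a priori bound for the solvability class, and $w'$ from Proposition \ref{nil} depends only on $n$, so the resulting $v_n$ depends only on $n$ and works simultaneously for every solvable $G$ with $|G| \leq n$ — there is no obstruction here, but it is the point that makes the statement a statement about the family rather than about individual groups. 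No single step is hard; the only thing to watch is that each invoked bound ($m(p) \leq \log_2 n$, Newman's solvability-class estimate, the Hall/Burnside nilpotency of the kernel, and the length bound in \cite{elkthom}) is quoted in the form actually needed, which the discussion preceding the proposition has already arranged.
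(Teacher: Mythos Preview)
Your proposal is correct and follows the paper's own argument essentially step for step: the Fitting subgroup $N$, the factorization through the Frattini quotients to obtain the extension $1 \to K \to G \to \beta(G) \to 1$, Burnside's result for the nilpotency of the kernel, Newman's bound for the solvability class of the image, the word from \cite{elkthom} of length $O(\log(n)^3)$, Proposition~\ref{nil} for the kernel, and Lemma~\ref{ext} to combine them. Your explicit remark on uniformity in $n$ is a welcome clarification the paper leaves implicit.
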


Let us remark, that the analysis from above showed that any solvable group of size $n$ has solvability class bounded by $O(\log \log(n))$. This was known from work of Glasby \cite{glasby}. However, Glasby's estimates are not sufficient to establish Proposition \ref{sol} in this form. 
It is easy to see that this double-logarithmic bound is actually sharp. Indeed, the 2-Sylow subgroup of ${\rm Sym}(2^n)$ is the $n$-fold iterated wreath product of $\Zb/2\Zb$. Its cardinality is $2^{2^n-1}$ and its solvability class is equal to $n$.

\section{Semi-simple groups}

Let us first discuss finite non-abelian simple groups before we come to semi-simple groups.
\begin{proposition} \label{simple}
Let $n \in \Nb$. There exists a non-trivial word $w_n \in \F_2$ of length bounded by $O(n/\log(n)^2)$ that is a law for all finite non-abelian simple groups of size at most $n$.
\end{proposition}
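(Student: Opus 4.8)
The plan is to reduce the statement for all finite non-abelian simple groups of size at most $n$ to a bounded number of "uniform" cases, each handled by an explicit short law, and then to combine these laws using Lemma \ref{commut}. The starting point is the CFSG: a finite non-abelian simple group $S$ with $|S| \leq n$ is either an alternating group $\mathrm{Alt}(k)$ with $k \leq \log n / \log\log n$ or so (since $k! \leq n$ forces $k = O(\log n/\log\log n)$), a simple group of Lie type over $\Fb_q$ of bounded rank, a simple group of Lie type of unbounded rank but then necessarily over a very small field, or one of the finitely many sporadic groups. The sporadic groups contribute only a constant, absorbed into the $O(\cdot)$. The alternating groups $\mathrm{Alt}(k)$ embed in $\mathrm{Sym}(k)$, and by \cite{kozthom} there is a law for $\mathrm{Sym}(k)$ of length $k^{O(1)}$, which is $\mathrm{polylog}(n)$ here — much smaller than $n/\log(n)^2$, so this case is comfortably within budget once we union over all $k$ via Lemma \ref{commut} (only $O(\log n)$ many values of $k$, each contributing a polylog length, so the combined word still has polylog length).

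The main work is the groups of Lie type. Here I would invoke the work of Hadad \cite{MR2764921} (and its refinements), which produces for a finite simple group of Lie type $S$ of rank $r$ over $\Fb_q$ a non-trivial law of length roughly $q^{O(r)}$ or, in the good cases, polynomial in $q$ and $r$. The key numerical point is that $|S|$ is comparable to $q^{\dim}$ where $\dim \asymp r^2$ (for the classical families) or $\dim = O(r)$ with $r$ bounded (for exceptional types), so $q^r$ is at most roughly $|S|^{1/r} \leq n^{1/r}$; when $r$ is large this is tiny, and when $r$ is bounded we instead use that $q$ is at most $n^{O(1)}$ and Hadad's bound is polynomial in $q$ — but we actually want $O(n/\log(n)^2)$, so we need to be careful that even in the worst balance (e.g.\ $\mathrm{PSL}_2(q)$, where $|S| \asymp q^3$ and the shortest law has length $\asymp q \asymp n^{1/3}$) the length stays well below $n/\log(n)^2$. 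Indeed $n^{1/3} = o(n/\log(n)^2)$, so every individual Lie type group of size at most $n$ already admits a law of length $O(n^{1/3+o(1)})$, hence certainly $O(n/\log(n)^2)$. The remaining task is to union these over all the Lie type groups of size at most $n$: there are only $n^{o(1)}$ isomorphism types of simple groups of size at most $n$ (in fact at most $2$ for each order with rare exceptions, and $O(\log n)$ types overall up to order $n$ if one counts by family and parameter), so Lemma \ref{commut} with $m = n^{o(1)}$ and $\max|w_i| = O(n^{1/3+o(1)})$ produces a single law of length $m^2 \cdot \max|w_i| = n^{o(1)} \cdot n^{1/3+o(1)} = n^{1/3+o(1)} = O(n/\log(n)^2)$.

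The step I expect to be the main obstacle is getting a uniformly-stated short law for \emph{every} family of groups of Lie type with an explicit length bound in terms of $q$ and the rank, and in particular handling the unbounded-rank classical families where one must exploit that either the rank is large (making $q^{O(r)}$ or the relevant quantity small relative to $n$) or the field is small. Concretely, for $\mathrm{PSL}_m(q)$ with both $m$ and $q$ growing, one needs a law whose length is controlled by, say, $q$ times a polynomial in $m$, or by exploiting a short word vanishing on $\mathrm{GL}_m(q)$-type groups via a divisibility/exponent argument (every element of $\mathrm{GL}_m(q)$ has order dividing $\mathrm{lcm}$ of $q^i - 1$ for $i \leq m$, which is at most $q^{O(m)}$, and more cleverly one uses that the group acts on an $m$-dimensional space to build a law of length polynomial in $m$ and $q$ rather than exponential). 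Marshalling the literature (Hadad, Kozma--Thom, and the explicit order formulas for finite simple groups) so that in \emph{all} cases the resulting length is bounded by $O(n/\log(n)^2)$ — rather than merely $n^{1/2}$ or worse in some borderline classical family — is the delicate accounting, but the $n^{1/3}$ worst case from $\mathrm{PSL}_2(q)$ strongly suggests there is room to spare, and the $\log$-factor savings demanded in Proposition \ref{simple} is in fact automatic from the polylog slack once one has any polynomial-of-small-degree bound.
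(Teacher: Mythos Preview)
Your overall strategy---build a short law for each finite simple group and then union them with Lemma \ref{commut}---is exactly what the paper does. But the counting step contains a real error that hides where the bound $n/\log(n)^2$ actually comes from.

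You assert that there are only $n^{o(1)}$ (or even $O(\log n)$) isomorphism types of non-abelian simple groups of size at most $n$. This is false: already among the groups ${\rm PSL}_2(q)$ with $q$ a prime power and $q^3 \leq n$, there are $\Theta(n^{1/3}/\log n)$ isomorphism types, by the prime number theorem. So the parameter $m$ in Lemma \ref{commut} is not $n^{o(1)}$ but of order $n^{1/3}/\log n$. Plugging this into $m^2 \max|w_i|$ with $\max|w_i| = O(n^{1/3})$ gives
\[
\left(\frac{n^{1/3}}{\log n}\right)^{2} \cdot n^{1/3} \;=\; \frac{n}{\log(n)^2},
\]
which is precisely the claimed bound, with no slack whatsoever. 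Your conclusion that the final word has length $n^{1/3+o(1)}$ is therefore wrong, and your remark that ``the $\log$-factor savings demanded in Proposition \ref{simple} is in fact automatic from the polylog slack'' is the opposite of the truth: the ${\rm PSL}_2$ family is the bottleneck and saturates the stated bound.

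The paper organises the argument around this observation. It treats ${\rm PSL}_2(q)$ explicitly (three power words $a^{q-1}, a^{p}, a^{q+1}$ combined via Lemma \ref{commut}, then unioned over all $q \leq n^{1/3}$), and handles \emph{every other} non-abelian simple group uniformly by quoting Kantor--Seress: outside ${\rm PSL}_2$, the maximal element order is $O(|G|^{1/4}) = O(n^{1/4})$, so a single application of Lemma \ref{commut} to $a, a^2, \dots, a^{Cn^{1/4}}$ gives a law of length $O(n^{3/4})$, well inside the budget. This sidesteps the family-by-family accounting you worried about (Hadad, unbounded-rank classical groups, etc.) and makes transparent that the exponent in the proposition is governed entirely by ${\rm PSL}_2$.
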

\begin{proof}
This uses the CFSG. The worst case in terms of length of laws is the family ${\rm PSL}_2(q)$ with $q=p^k$ some prime power.
It is well-known that the group ${\rm PSL}_2(p^k)$ admits a law of length $O(p^k)$. Indeed, it is easy to see that every element of ${\rm PSL}_2(p^k)$ has order dividing either $p^{k}-1,p$ or $p^k+1$, corresponding to the diagonalizable, the unipotent, and the irreducible case. Thus, combining $a^{p^{k}-1}, a^{p}$ and $a^{p^k+1}$ using Lemma \ref{commut} yields a law for ${\rm PSL}_2(p^k)$ of length $O(p^k)$, where the implied constant does neither depend on $p$ nor $k$.
At the same time the group ${\rm PSL}_2(p^k)$ and has size roughly $p^{3k}$. Combining all such laws for pairs $(p,k)$ with $p^{3k} \leq n$ using Lemma \ref{commut}, we obtain a law with the desired bound on the length. Indeed, it is an easy consequence of the prime number theorem that there are at most $O(n^{1/3}/\log(n))$ prime powers less than $n^{1/3}$. Thus, we obtain a law of length $O( (n^{1/3}/\log(n))^2 n^{1/3})$. This proves the claim for finite simple groups of the form ${\rm PSL}_2(q)$.

For all other families the maximal element order of the group is bounded by $O(n^{1/4})$ by results summarized in work of Kantor-Seress \cite{MR2531224}. Indeed, this is well-known for ${\rm Alt}(n)$, by a classical result of Landau, the maximal order of an element in ${\rm Alt}(n)$ is bounded by $\exp(c (n \log(n))^{1/2})$, for some small and explicit constant. 

The following table summarizes the situation for families of classical Chevalley groups of unbounded rank (twisted or untwisted). The first row contains a lower bound for the size of the group (up to a universal multiplicative factor), the second row contains an upper bound for the maximal element order (again up to some universal multiplicative factor). All information about the maximal element order is taken from the exposition  in \cite{MR2531224}.

$$
\begin{array}{c|cccccccccc}
&{\rm A}_d(q)&{}^2{\rm A}_d(q^2)&{\rm B}_d(q)&{\rm C}_d(q)&{\rm D}_d(q)&{}^2{\rm D}_d(q^2)\\ 
\hline
\mbox{size} &q^{d^2+2d}/d& q^{d^2+2d}/d &q^{2d^2+d}& q^{2d^2+d} &q^{2d^2-d}  &q^{2d^2-d} \\
\mbox{meo} &q^{d}& q^{d} &q^d& q^d & q^d &q^d 
\end{array}
$$

We now come to the families of bounded rank. These contain twisted forms of classical Chevalley groups that only exist in small rank, exceptional Chevalley groups (twisted and untwisted) and Suzuki-Ree groups. Our material stems again from \cite{MR2531224}.

$$
\begin{array}{c|cccccccccc}
&{}^2{\rm B}_2(q)&{}^3{\rm D}_4(q^3)& {\rm F}_4(q)&{}^2 {\rm F}_4(q)& {\rm E}_6(q) &{}^2{\rm E}_6(q^2) & {\rm E}_7(q) & {\rm E}_8(q)  & {\rm G}_2(q) &{}^2{\rm G}_2(q)\\ 
\hline
\mbox{size} &q^5&q^{28}&q^{52} &q^{26}& q^{78} &q^{78}& q^{133} &q^{248}  & q^{14} &q^7\\
\mbox{meo} &q&q^4&q^4 &q^{2}& q^6 &q^6& q^7 & q^8 &q^2 &q
\end{array}
$$
We easily see that the only case for which the maximal element order cannot be bounded by $O(n^{1/4})$ is ${\rm A}_1(q)={\rm PSL}_2(q)$. Since we are only interested in asymptotic bounds, we can ignore sporadic groups and Tits' group.

Thus, we can apply Lemma \ref{commut} to the list $a,a^2,\dots,a^{Cn^{1/4}}$ for some suitable constant $C$ in order to obtain a law of length $O(n^{3/4})$ that applies to all simple groups of size at most $n$ and different from ${\rm PSL}_2(q)$.
Combining now the law for ${\rm PSL}_2(q)$ and the law for the remaining families using Lemma \ref{commut} again, we can finish the proof.
\end{proof}

The study of semi-simple groups goes back to the work of Fitting, see \cite{fitting} for the original reference.
Recall, an isotypical semi-simple group (in the sense of Fitting) is a group $G$ which fits into a chain of inclusions
$$H^k \subset G \subset {\rm Aut}(H) \wr {\rm Sym}(k),$$
where $k$ is a positive integer, the group ${\rm Sym}(k)$ acts on $k$ points, and  $H$ is some finite non-abelian simple group. Let $G$ be an iso-typical semi-simple group of size at most $n$. Denote by $m$ the size of $H$. It is clear that  $m^k \leq n$ and hence $k \leq \log(n)$. Note that the group ${\rm Aut}(H) \wr {\rm Sym}(k)$ fits into an extension
$$1 \to {\rm Aut}(H)^k \to {\rm Aut}(H) \wr {\rm Sym}(k) \to {\rm Sym}(k) \to 1.$$
By a result of Kozma and the author \cite{kozthom} there exists group law for ${\rm Sym}(k)$ of length bounded by $\exp(C \log(k)^4 \log \log(k)).$ This result also relies on the CFSG. For the purpose of this proof, we can work with an explicit law of length bounded by $\exp(C (k \log(k))^{1/2})$, see \cite{brm} or \cite{kozthom} for a discussion. 
Moreover, the automorphism group ${\rm Aut}(H)$ of a non-abelian finite simple group $H$ fits into an exact sequence
$$1 \to H \to {\rm Aut}(H) \to {\rm Out}(H) \to 1,$$ where the group ${\rm Out}(H)$ solvable of class $3$. This assertion is usually called Schreier's Conjecture and was only proved as a consequence of the CFSG. Thus, if $H$ admits a law of length bounded by $O(m/\log(m)^2)$ by Proposition \ref{simple}, then Lemma \ref{ext} allows to construct a law of length bounded by $O(m/\log(m)^2)$ also for ${\rm Aut}(H)$. Note that this will also be a law for ${\rm Aut}(H)^k$. Using Lemma \ref{ext} again, we can combine this with the law for ${\rm Sym}(k)$ and obtain a law for $G$.
The length will be bounded by
$$O(m \log(m)^{-2} \exp(C (k \log(k))^{1/2}),$$
where the construction only depended on $m$ and $k$.
We treat the case $k=1,m=n$ and $k =\log(n),m=n^{1/2}$ separately and combine the results using  Lemma \ref{commut}.
Note that each isotypical semi-simple group of cardinality less or equal $n$ will be covered by one of the two cases, since if $k \geq 2$ implies $m \leq n^{1/2}$. Thus, the resulting word will be a law for all isotypical semi-simple groups of size at most $n$.
It is easy to see that the dominating contribution comes from $k=1,m=n$. This proves Proposition \ref{simple} for isotypical semi-simple groups.

A group is called semi-simple (in the sense of Fitting) if it does not contain any non-trivial abelian normal subgroups, see for example \cite[p. 89]{MR1357169} for a modern reference. (Note that the term {\it semi-simple} is ambiguously used in group theory.) By a result of Fitting, a general semi-simple group $G$ is contained in a product of isotypical semi-simple groups $G_{(H,k)}$, where $H^k \subset G_{(H,k)} \subset {\rm Aut}(H) \wr S_k$ and also $H^k \subset G$. 
Since all bounds depended only on the fact that $|H^k| \leq |G|$, the proof of Proposition \ref{simple} for isotypical semi-simple groups extends verbatim to the case of general semi-simple groups. 

Thus, we can summarize our analysis as follows.
\begin{proposition} \label{semi-simple}
Let $n \in \Nb$. There exists a non-trivial word $w_n \in \F_2$ of length bounded by $O(n/\log(n)^2)$ that is a law for all finite semi-simple groups of size at most $n$.
\end{proposition}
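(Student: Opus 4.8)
The plan is to exploit the structural description of semi-simple groups due to Fitting. First I would reduce to the isotypical case: a semi-simple group $G$ (no non-trivial abelian normal subgroup) embeds into a finite product of isotypical semi-simple groups $G_{(H,k)}$, each sitting in a chain $H^k \subset G_{(H,k)} \subset \aut(H) \wr {\rm Sym}(k)$ with $H^k \subset G$ for a non-abelian finite simple group $H$; since $|H|^k \leq |H^k| \leq |G| \leq n$, every such factor satisfies $m^k \leq n$, where $m := |H|$, and hence $k \leq \log_2 n$. A word vanishing on all of these factors vanishes on $G$, and all length bounds will depend on $m$ and $k$ only through the inequality $m^k \leq n$; so it suffices to produce, for each admissible pair $(m,k)$, a short law for $\aut(H) \wr {\rm Sym}(k)$ and then amalgamate finitely many of them via Lemma \ref{commut}.

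For a fixed admissible $(m,k)$ I would build the law in layers using Lemma \ref{ext}. The innermost layer is $\aut(H)$: by Schreier's Conjecture (a consequence of the CFSG) the group ${\rm Out}(H)$ is solvable of derived length $3$, so it has a law of length $O(1)$, and combining it via Lemma \ref{ext} with the law of length $O(m/\log(m)^2)$ for $H$ furnished by Proposition \ref{simple} yields a law for $\aut(H)$ of length still $O(m/\log(m)^2)$; the same word is then automatically a law for $\aut(H)^k$. The outer layer is the extension $1 \to \aut(H)^k \to \aut(H) \wr {\rm Sym}(k) \to {\rm Sym}(k) \to 1$; feeding a law for ${\rm Sym}(k)$ — where one may take the explicit bound $\exp(C(k\log k)^{1/2})$ from \cite{brm} or \cite{kozthom} — into Lemma \ref{ext} produces a law for $\aut(H)\wr{\rm Sym}(k)$ of length $O\!\left(m\log(m)^{-2}\exp(C(k\log k)^{1/2})\right)$, again depending only on $m$ and $k$.

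Finally I would observe that $k \geq 2$ forces $m \leq n^{1/2}$, so as $(m,k)$ ranges over admissible pairs the length bound above is dominated by its values at the two extreme pairs $(m,k)=(n,1)$ and $(m,k)=(n^{1/2},\log_2 n)$. The first gives $O(n/\log(n)^2)$; the second gives $O\!\left(n^{1/2}\exp(C(\log n\cdot\log\log n)^{1/2})\right)=n^{1/2+o(1)}$, which is negligible. Combining the two corresponding words with Lemma \ref{commut} (a bounded number of words) then produces a single word whose vanishing set contains $G\times G$ for every semi-simple $G$ of size at most $n$, of length $O(n/\log(n)^2)$. The only delicate point — the place where something could go wrong — is the trade-off in the outer layer: the super-polynomial-in-$k$ factor $\exp(C(k\log k)^{1/2})$ coming from the symmetric-group law would be fatal if $m$ could remain of size $n$ while $k$ grows, but the constraint $m^k\leq n$ guarantees exactly that whenever $k$ is large, $m$ is so small that $m\cdot\exp(C(k\log k)^{1/2})$ stays $n^{1/2+o(1)}$; checking this interpolation over all admissible pairs is the crux, and everything else is a routine application of Lemmas \ref{ext} and \ref{commut} together with Proposition \ref{simple}.
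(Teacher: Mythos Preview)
Your proposal is correct and follows essentially the same route as the paper: reduce to the isotypical case via Fitting, build a law for $\aut(H)\wr{\rm Sym}(k)$ by layering Lemma~\ref{ext} through Schreier's Conjecture and Proposition~\ref{simple} on the inside and a symmetric-group law on the outside, then split into the two cases $k=1$ and $k\geq 2$ (forcing $m\leq n^{1/2}$) and combine with Lemma~\ref{commut}. Even your identification of the ``crux''---the trade-off between the $\exp(C(k\log k)^{1/2})$ factor and the constraint $m^k\leq n$---matches the paper's emphasis exactly.
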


This finishes the discussion of the case of semi-simple groups.

\section{General case}

We are now ready to prove our main result. The general case is done by combining our results for solvable and for semi-simple groups. By a result of Fitting \cite{fitting}, every finite group has a solvable normal subgroup such that the quotient is semi-simple. 

Let $G$ be a finite group of size at most $n$ with solvable normal subgroup $S$ and semi-simple quotient $L:=G/S$. Clearly, either $|S| \leq \log(n)^{9/2}$ or $|L| \leq n/\log(n)^{9/2}$. We apply Lemma \ref{ext} in both situations and combine the laws that we obtain from Proposition \ref{sol} and Proposition \ref{semi-simple}. In the first case, a word of length bounded by $O(\log(\log(n)^{9/2})^{9/2} \cdot n \log(n)^{-2})$ exists that serves as a law. In the second case, we obtain a word of length bounded by $O(\log(n)^{9/2} \cdot n/\log(n)^{9/2}  \log(n/\log(n)^{9/2})^{-2}) = O(n \log(n/\log(n)^{9/2})^{-2})$. 
Combining the two cases using Lemma \ref{commut}, we obtain a word of length $O(n \log\log(n)^{9/2}/\log(n)^2)$ and claim that it is a law for all groups of size at most $n$.
Indeed, any finite group of size at most $n$ will satisfy one of the two conditions -- finally, this proves our main theorem.
\begin{theorem} \label{main}
For every $n \in \Nb$ there exists a non-trivial word $w_n \in \F_2$ of length $$O\left(\frac{n\log\log(n)^{9/2}}{\log(n)^{2}} \right)$$ that is a law for all finite groups of size at most $n$. In particular, the length of $w_n$ is sublinear in $n$.
\end{theorem}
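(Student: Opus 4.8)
The plan is to assemble the theorem from the three major building blocks already established: Proposition \ref{sol} for solvable groups (law length $O(\log(n)^{9/2})$), Proposition \ref{semi-simple} for semi-simple groups (law length $O(n/\log(n)^2)$), Lemma \ref{ext} to handle the extension, and Lemma \ref{commut} to merge the resulting cases. The structural input is Fitting's theorem: every finite group $G$ has a solvable normal subgroup $S$ (the solvable radical) such that $L := G/S$ is semi-simple in the sense of Fitting. So first I would fix $G$ of size at most $n$, write down this decomposition, and note $|S| \cdot |L| \leq |G| \leq n$.

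Next I would do a case split on the relative sizes of $S$ and $L$, with threshold $t := \log(n)^{9/2}$. \emph{Case 1:} $|S| \leq t$. Then Proposition \ref{sol} gives a law for $S$ of length $O(\log(|S|)^{9/2}) = O(\log(\log(n)^{9/2})^{9/2}) = O((\log\log(n))^{9/2})$ (absorbing constants), and Proposition \ref{semi-simple} gives a law for $L$ of length $O(n/\log(n)^2)$. Lemma \ref{ext} then produces a law for $G$ of length $O((\log\log(n))^{9/2} \cdot n/\log(n)^2)$. \emph{Case 2:} $|S| > t$, hence $|L| = |G|/|S| \leq n/t = n/\log(n)^{9/2}$. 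Now Proposition \ref{sol} gives a law for $S$ of length $O(\log(n)^{9/2})$ and Proposition \ref{semi-simple} gives a law for $L$ of length $O\!\left(\frac{n/\log(n)^{9/2}}{\log(n/\log(n)^{9/2})^2}\right)$; since $\log(n/\log(n)^{9/2}) = \log(n) - \tfrac{9}{2}\log\log(n) = \Theta(\log n)$, this is $O(n/\log(n)^{9/2+2})$. Lemma \ref{ext} then yields a law for $G$ of length $O(\log(n)^{9/2} \cdot n/\log(n)^{9/2+2}) = O(n/\log(n)^2)$. So Case 2 is in fact strictly better than the claimed bound, and Case 1 is the dominant one.

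Finally, since I do not know a priori which case $G$ falls into, I would take the two explicit words $w_n^{(1)}$ and $w_n^{(2)}$ constructed above — note crucially that each depends only on $n$, not on $G$, because the propositions and lemmas produce uniform words — and combine them via Lemma \ref{commut} with $m = 2$. This multiplies the length by only a constant factor $16 \cdot 4 = 64$, so the resulting word $w_n$ has length $O(n(\log\log n)^{9/2}/\log(n)^2)$, and its vanishing set contains $Z(G, w_n^{(1)}) \cup Z(G, w_n^{(2)})$. Every $G$ of size at most $n$ satisfies one of the two case hypotheses, hence $w_n^{(1)}$ or $w_n^{(2)}$ is a law for it, hence $w_n$ is a law for it. This is $o(n)$ since $(\log\log n)^{9/2} = o(\log(n)^2)$, giving sublinearity. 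One point to be careful about: Lemma \ref{ext} requires the inner law to be a law for the normal subgroup and the outer for the quotient, so the roles of $w'$ (for $S$) and $w''$ (for $L$) must be assigned in that order, and I should check the resulting length estimate $|w'|(|w''|+2)$ is what I used above — it is, up to the harmless $+2$ absorbed into the $O$-notation. I expect no real obstacle here; the theorem is a bookkeeping assembly of the earlier results, and the only mildly delicate point is confirming that the worst case among the two sub-cases is Case 1, which produces the stated $\log\log$ factor.
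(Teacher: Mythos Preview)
Your proposal is correct and follows essentially the same approach as the paper: Fitting's decomposition into solvable radical and semi-simple quotient, the same threshold $t=\log(n)^{9/2}$ for the case split, Lemma~\ref{ext} to handle each extension, and Lemma~\ref{commut} to merge the two resulting words. Your write-up is in fact somewhat more careful than the paper's, explicitly noting that the words depend only on $n$ and checking the roles of $w'$ and $w''$ in Lemma~\ref{ext}.
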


Note that the law that is constructed in the proof of the previous theorem is a completely explicit combination of powers and commutators -- the construction itself does not depend on the CFSG. Note also that the proof depends on the CFSG only in using the following two facts:
\begin{enumerate}
\item[(i)] The order of elements in a non-abelian simple group $G$ different from ${\rm PSL}_2(p^k)$ is bounded from above by $O(|G|^{1/4})$.
\item[(ii)] The outer automorphism group of a non-abelian finite simple group is solvable of class at most $3$.
\end{enumerate}

In Theorem \ref{main}, there is room for improvement in the factor $\log\log(n)^{9/2}$ (even though we were not able to remove it completely) and we believe that $n/\log(n)^2$ is close to the truth, i.e., that the result in Theorem \ref{main} is almost sharp. 
However, it seems notoriously difficult to prove non-trivial lower bounds for the length of laws. This problem also arises in the study of the symmetric group, see \cite{kozthom}, where the best known lower bounds for the length of a law for ${\rm Sym}(n)$ are still linear in $n$. In that direction it would also be very interesting to determine if the bounds in the proof of Proposition \ref{simple} are sharp. This is of interest since ${\rm Sym}(n)$ contains ${\rm PSL}_2(p^k)$ for all $p^k<n$ and hence, a result in this direction would provide a non-trivial lower bound for the length of laws of ${\rm Sym}(n)$. However, any of this seems currently out of reach.

\section*{Acknowledgments}

This research was supported by ERC Starting Grant No.\ 277728. I thank Martin Kassabov for interesting discussions on this topic and helpful comments on this paper. I thank Laci Pyber for pointing out the work of Glasby \cite{glasby}.

\begin{bibdiv}
\begin{biblist}

\bib{MR2574859}{article}{
   author={Bou-Rabee, Khalid},
   title={Quantifying residual finiteness},
   journal={J. Algebra},
   volume={323},
   date={2010},
   number={3},
   pages={729--737},
}

\bib{brm}{article}{
   author={{Bou-Rabee}, Khalid},
   author={McReynolds, Ben},
   title={Asymptotic growth and least common multiples in groups},
   journal={Bull. Lond. Math. Soc.},
   volume={43},
   date={2011},
   number={6},
   pages={1059--1068},
}

\bib{elkthom}{article}{
   author={Elkasapy, Abdul},
   author={Thom, Andreas},
   title={On the length of the shortest non-trivial element in the derived and the lower central series},
   status={to appear in J. Group Theory},
}

\bib{fitting}{article}{
   author={Fitting, Hans},
   title={Beitr\"age zur Theorie der Gruppen von endlicher Ordnung}, 
   journal={Jahresbericht DMV} 
   year={1938}
}

\bib{MR2532876}{article}{
   author={Gamburd, Alex},
   author={Hoory, Shlomo},
   author={Shahshahani, Mehrdad},
   author={Shalev, Aner},
   author={Vir{\'a}g, Balint},
   title={On the girth of random Cayley graphs},
   journal={Random Structures Algorithms},
   volume={35},
   date={2009},
   number={1},
   pages={100--117},
}

\bib{MR2972333}{article}{
   author={Gimadeev, Renat},
   author={Vyalyi, Misha},
   title={Identical relations in symmetric groups and separating words with
   reversible automata},
   conference={
      title={Computer science---theory and applications},
   },
   book={
      series={Lecture Notes in Comput. Sci.},
      volume={6072},
      publisher={Springer},
      place={Berlin},
   },
   date={2010},
   pages={144--155},
}

\bib{glasby}{article}{
   author={Glasby, Stephen},
   title={The composition and derived lengths of a soluble group},
   journal={J. Algebra},
   volume={120},
   date={1989},
   number={2},
   pages={406--413},
}

\bib{MR0231903}{book}{
   author={Gorenstein, Daniel},
   title={Finite groups},
   publisher={Harper \& Row, Publishers, New York-London},
   date={1968},
   pages={xv+527},
}
	
%

\bib{MR2764921}{article}{
   author={Hadad, Uzy},
   title={On the shortest identity in finite simple groups of Lie type},
   journal={J. Group Theory},
   volume={14},
   date={2011},
   number={1},
   pages={37--47},
}

\bib{hall}{book}{
   author={Hall, Marshall, Jr.},
   title={The theory of groups},
   publisher={The Macmillan Co., New York, N.Y.},
   date={1959},
   pages={xiii+434},
}

\bib{MR0089851}{article}{
   author={Huppert, Bertram},
   title={Lineare aufl\"osbare Gruppen},
   language={German},
   journal={Math. Z.},
   volume={67},
   date={1957},
   pages={479--518},
}

\bib{MR2531224}{article}{
   author={Kantor, William M.},
   author={Seress, {\'A}kos},
   title={Large element orders and the characteristic of Lie-type simple
   groups},
   journal={J. Algebra},
   volume={322},
   date={2009},
   number={3},
   pages={802--832},
}

\bib{MR2784792}{article}{
   author={Kassabov, Martin},
   author={Matucci, Francesco},
   title={Bounding the residual finiteness of free groups},
   journal={Proc. Amer. Math. Soc.},
   volume={139},
   date={2011},
   number={7},
   pages={2281--2286},
}

\bib{kozthom}{article}{
   author={Kozma, Gady},
   author={Thom, Andreas},
   title={Divisibility and groups laws},
   status={to appear in Math. Ann.},
}

\bib{MR1722784}{article}{
   author={Lucchini, Andrea},
   title={On the order of transitive permutation groups with cyclic
   point-stabilizer},
   language={English, with English and Italian summaries},
   journal={Atti Accad. Naz. Lincei Cl. Sci. Fis. Mat. Natur. Rend. Lincei
   (9) Mat. Appl.},
   volume={9},
   date={1998},
   number={4},
   pages={241--243 (1999)},
   issn={1120-6330},
}

\bib{MR0302781}{article}{
   author={Newman, Mike F.},
   title={The soluble length of soluble linear groups},
   journal={Math. Z.},
   volume={126},
   date={1972},
   pages={59--70},
}

\bib{MR1357169}{book}{
   author={Robinson, Derek J. S.},
   title={A course in the theory of groups},
   series={Graduate Texts in Mathematics},
   volume={80},
   edition={2},
   publisher={Springer-Verlag, New York},
   date={1996},
   pages={xviii+499},
}

\bib{MR3069692}{article}{
   author={Zassenhaus, Hans},
   title={Beweis eines Satzes \"uber diskrete Gruppen},
   language={German},
   journal={Abh. Math. Sem. Univ. Hamburg},
   volume={12},
   date={1937},
   number={1},
   pages={289--312},
}

\end{biblist}
\end{bibdiv}

\end{document}